\def\f{\varphi}
      \def\dC{{\mathbb C}}
   \def\dN{{\mathbb N}}   
      \def\dR{{\mathbb R}}
\newcommand{\be}{\begin{equation}}
\newcommand{\ee}{\end{equation}}
\newcommand{\ba}{\begin{eqnarray}}
\newcommand{\ea}{\end{eqnarray}}
\newcommand{\baa}{\begin{eqnarray*}}
\newcommand{\eaa}{\end{eqnarray*}}
\newcommand{\bb}{}
\newcounter{my}
\newcommand{\he}%
   {\stepcounter{equation}\setcounter{my}%
   {\value{equation}}\setcounter{equation}0%
   }%
\newcommand{\she}%
   {\setcounter{equation}{\value{my}}%
    }%
\def\dist{\operatorname{dist}}
\def\diag{{\rm diag\,}}
\newtheorem{theorem}{Theorem}[section]
\newtheorem{proposition}[theorem]{Proposition}
\newtheorem{corollary}[theorem]{Corollary}
\theoremstyle{definition}
\newtheorem{remark}[theorem]{Remark}
\numberwithin{equation}{section}
\begin{document}

\title[Jacobi matrices generated by ratios of hypergeometric functions]
{Jacobi matrices generated by ratios of hypergeometric functions}

\author{Maxim Derevyagin}

\address{
Maxim Derevyagin\\
University of Mississippi\\
Department of Mathematics\\
Hume Hall 305 \\
P. O. Box 1848 \\
University, MS 38677-1848, USA }
\email{derevyagin.m@gmail.com}


\begin{abstract}
A problem of determining zeroes of the Gauss hypergeometric function goes back to Klein, Hurwitz, and Van Vleck. In this very short note we show how ratios of hypergeometric functions arise as $m$-functions of Jacobi matrices and we then revisit the problem based on the recent developments of the spectral theory of non-Hermitian Jacobi matrices.
\end{abstract}

\subjclass{Primary 33C05, 47B36; Secondary 30B70, 47A57}
\keywords{Gauss hypergeometric function, generalized Nevanlinna functions, continued fractions, non-Hermitian Jacobi matrices}

\maketitle

\section{Introduction}

Let us recall~\cite{AAR} that the Gauss hypergeometric function $F(a,b,c;z)$ is a special function defined by the series
\begin{equation}\label{HypDef}
F(a,b,c;z):={_{2}F_1}\left(\left.\!\!\begin{array}{c}a,b\\
c\end{array}\right|z\!\right)=1+\frac{ab}{c}\frac{z}{1!}+\frac{a(a+1)b(b+1)}{c(c+1)}\frac{z^2}{2!}+\dots,
\end{equation}
where the parameters $a$, $b$ and $c$ are complex numbers. 
Clearly, for the series~\eqref{HypDef} to be well-defined we also have to assume that $c$ is not a nonpositive integer.

Also, it is not so hard to see that if either $a$ or $b$ is a negative integer then 
$F(a,b,c;z)$ is just a polynomial. Otherwise, one can easily check by the ratio test that 
the radius of convergence of the series~\eqref{HypDef} is 1. Thus, to be more precise, by
$F(a,b,c;z)$ we understand the function defined by the series~\eqref{HypDef} for $|z|<1$ and
by analytic continuation elsewhere. 

It could sometimes be convenient to use continued fractions for understanding analytic continuations.
In order to get them in this context we will need 
the following contigous relations \cite[Section 2.5]{AAR} (see also~\cite[Section 6.1]{JTh})
\begin{equation}\label{HypId1}
F(a,b,c;z)=F(a,b+1,c+1;z)-\frac{a(c-b)}{c(c+1)}zF(a+1,b+1,c+2;z)
\end{equation}
and
\begin{equation}\label{HypId2}
F(a,b+1,c+1;z)=F(a+1,b+1,c+2;z)-\frac{(b+1)(c-a+1)}{(c+1)(c+2)}zF(a+1,b+2,c+3;z),
\end{equation}
where the latter is simply another version of \eqref{HypId1} and  \eqref{HypId1} is easily verified from~\eqref{HypDef}. Next, representing~\eqref{HypId1} in the form 
\[
\frac{F(a,b,c;z)}{F(a,b+1,c+1;z)}=1+\frac{\displaystyle{-\frac{a(c-b)}{c(c+1)}z}}
{\displaystyle{\frac{F(a,b+1,c+1;z)}{F(a+1,b+1,c+2;z)}}}
\]
and rewriting~\eqref{HypId2} in the following manner
\[
\frac{F(a,b+1,c+1;z)}{F(a+1,b+1,c+2;z)}=1+\frac{\displaystyle{-\frac{(b+1)(c-a+1)}{(c+1)(c+2)}z}}
{\displaystyle{\frac{F(a+1,b+1,c+2;z)}{F(a+1,b+2,c+3;z)}}}
\]
lead to the continued fraction~\cite[Section 6.1]{JTh}
\begin{equation}\label{CFforRofHFs}
\frac{F(a,b,c;z)}{F(a,b+1,c+1;z)}\sim
1+\displaystyle{\frac{c_1z}{1+\displaystyle{\frac{c_2z}{1+\ddots}}}}=
1+
\frac{c_1z}{1}
\begin{array}{l} \\ +\end{array}
\frac{c_2z}{1}
\begin{array}{l} \\ +\end{array}
\begin{array}{l} \\ \cdots \end{array},
\end{equation}
where
\begin{equation}\label{cformula}
  \begin{aligned}
c_{2j+1}&=-\frac{(a+j)(c-b+j)}{(c+2j)(c+2j+1)},\quad j=0,1,2\dots\\
c_{2j}&=-\frac{(b+j)(c-a+j)}{(c+2j-1)(c+2j)},\quad j=1,2,3\dots.
  \end{aligned}
\end{equation}
Next, according to~\cite[Theorem 6.1]{JTh} the continued fraction~\eqref{CFforRofHFs} converges to the meromorphic function
\[ 
\frac{F(a,b,c;z)}{F(a,b+1,c+1;z)}
\]
uniformly on compact subsets of
\[
\{w\in\dC\setminus[1,\infty): {F(a,b,c;w)}/{F(a,b+1,c+1;w)}\ne \infty\}.
\]
Actually, in a sense the main object of this note is the continued fraction \eqref{CFforRofHFs} and we will proceed in the following way. In the next section we will discuss tridiagonal matrices associated with the even part of \eqref{CFforRofHFs} for the most general case of the parameters $a$, $b$, and $c$. Then, in Section 3 we will restrict ourselves to the case of real parameters and, hence, will be able to extract more information about the underlying tridiagonal matrices or, equivalently, about ratios of hypergeometric functions.

\section{The underlying Jacobi matrices}

In this section we are going to associate Jacobi matrices with ratios of hypergeometric functions and to do that we will make a few transformations of \eqref{CFforRofHFs} at first. To begin with, let us note that the substitution $z\mapsto -1/z$ reduces 
 \eqref{CFforRofHFs} to
\begin{equation}\label{SfracH}
\frac{F(a,b,c;-1/z)}{F(a,b+1,c+1;-1/z)}=1+
\frac{d_1}{z}
\begin{array}{l} \\ +\end{array}
\frac{d_2}{1}
\begin{array}{l} \\ +\end{array}
\frac{d_3}{z}
\begin{array}{l} \\ +\end{array}
\begin{array}{l} \\ \cdots \end{array},
\end{equation}
where we set $d_j=-c_j$. Next, the continued fraction~\eqref{SfracH} inherits its convergence from \eqref{CFforRofHFs}. Therefore, the even part of the continued fraction~\eqref{SfracH} represents the same function and we have 
\begin{equation}\label{EvenP}
\frac{F(a,b,c;-1/z)}{F(a,b+1,c+1;-1/z)}=
1+
\frac{d_1}{z+d_2}
\begin{array}{l} \\ -\end{array}
\frac{d_3d_4}{z+d_3+d_4}
\begin{array}{l} \\ -\end{array}
\frac{d_5d_6}{z+d_5+d_6}
\begin{array}{l} \\ -\end{array}
\begin{array}{l} \\ \cdots \end{array},
\end{equation}
where the right-hand side, that is, the the even part of~\eqref{SfracH} is found by applying \cite[Theorem 2.10]{JTh}. Next, one can rewrite \eqref{EvenP} in the following manner
\begin{equation*}
\frac{F(a,b,c;-1/z)}{F(a,b+1,c+1;-1/z)}-1=
\frac{4d_1}{4z+4d_2}
\begin{array}{l} \\ -\end{array}
\frac{16d_3d_4}{4z+4d_3+4d_4}
\begin{array}{l} \\ -\end{array}
\frac{16d_5d_6}{4z+4d_5+4d_6}
\begin{array}{l} \\ -\end{array}
\begin{array}{l} \\ \cdots \end{array}
\end{equation*}
or, equivalently, setting $z\mapsto z/4$ we arrive at
\begin{equation}\label{EvenP1}
\frac{F(a,b,c;-4/z)}{F(a,b+1,c+1;-4/z)}-1=
\frac{4d_1}{z+4d_2}
\begin{array}{l} \\ -\end{array}
\frac{16d_3d_4}{z+4d_3+4d_4}
\begin{array}{l} \\ -\end{array}
\frac{16d_5d_6}{z+4d_5+4d_6}
\begin{array}{l} \\ -\end{array}
\begin{array}{l} \\ \cdots \end{array}.
\end{equation}
Now, introducing the function
\begin{equation}\label{Bfunction}
B(a,b,c;z)=\frac{-1}{4d_1}\left(\frac{F(a,b,c;-4/(z-2))}{F(a,b+1,c+1;-4/(z-2))}-1\right)
\end{equation}
and the coefficients $a_0=2-4d_1$,
\begin{equation}\label{abformulas}
a_n=2-4d_{2n+1}-4d_{2n+2}, \quad b_{n-1}^2=16d_{2n+1}d_{2n+2}, \quad n=1,2, \dots,
\end{equation}
we see that $B$ admits the following representation
\begin{equation}\label{Brepr}
B(a,b,c;z)=
\frac{-1}{z-a_0}
\begin{array}{l} \\ -\end{array}
\frac{b_0^2}{z-a_1}
\begin{array}{l} \\ -\end{array}
\frac{b_1^2}{z-a_2}
\begin{array}{l} \\ -\end{array}
\begin{array}{l} \\ \cdots \end{array}.
\end{equation}

\begin{proposition}
Suppose $a$, $b$, and $c$ are complex numbers such that $c$ is not a nonpositive integer. Then for the entries of the $J$-fraction representation \eqref{Brepr} of the function $B$ defined by \eqref{Bfunction} we have that
\begin{equation}\label{ablimit}
\lim_{k\to\infty}a_k=0, \qquad \lim_{k\to\infty}b_k^2=1.
\end{equation}
\end{proposition}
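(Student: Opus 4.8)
The plan is to compute the asymptotics of the coefficients $c_j$ in \eqref{cformula} as $j\to\infty$, then chase them through the explicit substitutions $d_j=-c_j$ and \eqref{abformulas}. First I would observe from \eqref{cformula} that for large $j$ both families behave like $-\tfrac14$: more precisely, writing $n=j$ in the odd-indexed formula $c_{2j+1}=-\dfrac{(a+j)(c-b+j)}{(c+2j)(c+2j+1)}$, the numerator is $j^2+O(j)$ while the denominator is $4j^2+O(j)$, so $c_{2j+1}=-\tfrac14+O(1/j)$, and similarly $c_{2j}=-\tfrac14+O(1/j)$. Hence $d_{2n+1}=-c_{2n+1}\to\tfrac14$ and $d_{2n+2}=-c_{2n+2}\to\tfrac14$.

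Next I would plug these limits into the definitions \eqref{abformulas}. For $a_n=2-4d_{2n+1}-4d_{2n+2}$ we get $a_n\to 2-4\cdot\tfrac14-4\cdot\tfrac14=0$, which is the first claim. For $b_{n-1}^2=16\,d_{2n+1}d_{2n+2}$ we get $b_{n-1}^2\to 16\cdot\tfrac14\cdot\tfrac14=1$, which is the second claim. Since the shift $z\mapsto z-2$ in \eqref{Bfunction} and the normalization by $-1/(4d_1)$ only affect finitely many entries (indeed only $a_0$ and an overall prefactor), they do not influence the limits, so the asymptotics of the $J$-fraction entries of $B$ coincide with those computed from \eqref{abformulas}.

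The only genuine point requiring a little care — and the place I would be most careful — is the case distinction coming from the parameters: if $b$ (or, after the contiguous shift, $b+1$) is a negative integer, some $c_j$ vanish and the continued fraction \eqref{CFforRofHFs} terminates, so \eqref{Brepr} is a finite $J$-fraction and the limits in \eqref{ablimit} are vacuous or must be interpreted appropriately; one should note that the hypothesis only excludes $c$ from the nonpositive integers, so this degenerate situation can occur. Assuming we are in the generic (non-terminating) case, the estimates above are completely elementary: each $c_j$ is a ratio of two monic quadratics in the running index with the same leading coefficient ratio $1:4$, so the error terms are uniformly $O(1/j)$ and the limits follow at once. I would therefore present the argument as: (i) record the large-$j$ expansion of \eqref{cformula}; (ii) substitute into \eqref{abformulas}; (iii) remark that finitely many initial entries and the affine change of variable in \eqref{Bfunction} are irrelevant to the limit.
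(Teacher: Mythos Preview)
Your proposal is correct and follows exactly the same route as the paper: the paper's proof is the one-line observation that \eqref{cformula} together with $d_j=-c_j$ and \eqref{abformulas} immediately gives \eqref{ablimit}, and your steps (i)--(ii) just spell this out by noting $c_j\to -\tfrac14$ and substituting.

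Two minor remarks. Your step (iii) is unnecessary: the formulas \eqref{abformulas} already \emph{are} the entries of the $J$-fraction for $B$, with the shift $z\mapsto z-2$ built in (that is where the constant $2$ in $a_n=2-4d_{2n+1}-4d_{2n+2}$ comes from, so the shift affects every $a_n$, not only $a_0$). And your concern about terminating continued fractions is not really an issue for the proposition as stated: even if some $c_j$ vanish, only finitely many can, so the sequences defined by \eqref{abformulas} still satisfy the asserted limits.
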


\begin{proof}
Since $d_j=-c_j$, the combination of \eqref{cformula} and \eqref{abformulas} immediately yields \eqref{ablimit}.
\end{proof}
The second relation in \eqref{ablimit} suggests that for sufficiently large $k$ we can use the principal square root to determine $b_k$ 
from 
\[
b_{k}^2=16\frac{(a+k+1)(c-b+k+1)(b+k+1)(c-a+k+1)}{(c+2k+2)^2(c+2k+3)(c+2k+1)}.
\]
 Therefore, we still have
\[
\lim_{k\to\infty}a_k=0, \qquad \lim_{k\to\infty}b_k=1.
\]
Furthermore, it is not so hard to check that
\begin{equation}\label{trace}
\sum_{k=0}^{\infty}|a_k|+|b_k-1|<\infty.
\end{equation}
Next, it is standard how one can associate the $J$-fraction \eqref{Brepr} with the Jacobi matrix (for details, see \cite{AptKV}, \cite{B01}, \cite{Wall48}) 
\[
J=\left(%
\begin{array}{cccc}
  {a}_0 & {b}_0 &  &  \\
 {b}_0 & {a}_1 & {b}_1 &  \\
      & {b}_1 & {a}_2 & \ddots \\
      &     & \ddots & \ddots \\
\end{array}%
\right),
\]
which generates a bounded linear operator on the Hilbert space $\ell_2$ of complex square-summable sequences equipped with the usual inner product $\left<\cdot,\cdot\right>_{\ell_2}$. Let $\rho(J)$ denote the resolvent set of $J$.
We will also need a special notation  for the free Jacobi matrix
\[
J_0=\left(%
\begin{array}{cccc}
  0 & 1 &  &  \\
  1 & 0 & 1&  \\
      & 1 & 0 & \ddots \\
      &     & \ddots & \ddots \\
\end{array}%
\right).
\]
\begin{theorem}\label{ThWB}
Suppose $a$, $b$, and $c$ are complex numbers such that $c$ is not a nonpositive integer. Then the function $B$ admits the following representation 
\begin{equation}\label{BWf}
B(a,b,c;z)=\left<(J-zI)^{-1}e,e\right>_{\ell_2}, \quad z\in\rho(J),
\end{equation}
where $J$ is the corresponding complex Jacobi matrix, $I$ is the identity operator, and $e=(1,0,0,0,\dots)^{\top}$.
Moreover, $J-J_0$ is trace class.
\end{theorem}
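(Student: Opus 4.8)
The plan is to establish the resolvent formula \eqref{BWf} first on a neighbourhood of infinity, then propagate it to all of $\rho(J)$, and to read off the trace-class property directly from \eqref{trace}. Recall that $J$ is bounded, so $\rho(J)$ is nonempty and contains $\{z\in\dC:|z|>\|J\|\}$; on this set the resolvent is given by the norm-convergent Neumann series $(J-zI)^{-1}=-\sum_{j\ge0}z^{-j-1}J^{j}$, so that $\langle(J-zI)^{-1}e,e\rangle=-\sum_{j\ge0}z^{-j-1}\langle J^{j}e,e\rangle$ there.

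Next I would bring in the finite sections $J_{n}$, the top-left $n\times n$ corners of $J$. By the classical theory of $J$-fractions (see \cite{AptKV,B01,Wall48}), $\langle(J_{n}-zI)^{-1}e,e\rangle$ is exactly the $n$-th approximant of the continued fraction \eqref{Brepr}, namely a ratio $-Q_{n}(z)/P_{n}(z)$ of two polynomial solutions of the three-term recurrence with coefficients $a_{k}$, $b_{k}^{2}$. On the other hand $\|J_{n}\|\le\|J\|$, so for $|z|>\|J\|$ we also have $\langle(J_{n}-zI)^{-1}e,e\rangle=-\sum_{j\ge0}z^{-j-1}\langle J_{n}^{j}e,e\rangle$; and since $J$ is tridiagonal, $\langle J^{j}e,e\rangle$ depends only on the first $j+1$ rows and columns of $J$, whence $\langle J_{n}^{j}e,e\rangle=\langle J^{j}e,e\rangle$ as soon as $n\ge j+1$. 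Comparing the two Neumann series shows that the $n$-th approximants of \eqref{Brepr} converge to $\langle(J-zI)^{-1}e,e\rangle$ uniformly on $\{|z|\ge R\}$ for every $R>\|J\|$. Since the chain of transformations leading to \eqref{Brepr}, together with \cite[Theorem 6.1]{JTh}, shows that these same approximants converge to $B(a,b,c;z)$, we conclude that \eqref{BWf} holds whenever $|z|>\|J\|$.

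The trace-class assertion is a direct estimate. Write $J-J_{0}=D+L+L^{*}$, where $D=\operatorname{diag}(a_{0},a_{1},\dots)$ and $L$ is the strictly lower bidiagonal part of $J-J_{0}$, a one-step weighted shift with weights $b_{k}-1$. The trace norm of $D$ equals $\sum_{k\ge0}|a_{k}|$, while $L$ is a trace-norm-convergent sum of rank-one operators of trace norm $|b_{k}-1|$, so its trace norm equals $\sum_{k\ge0}|b_{k}-1|$, and likewise for $L^{*}$. Hence the trace norm of $J-J_{0}$ is at most $\sum_{k\ge0}|a_{k}|+2\sum_{k\ge0}|b_{k}-1|<\infty$ by \eqref{trace}, so $J-J_{0}$ is trace class (and in particular compact).

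Finally I would pass from $\{|z|>\|J\|\}$ to all of $\rho(J)$. Since $J-J_{0}$ is compact and $\sigma(J_{0})=[-2,2]$, the essential spectrum of $J$ is $[-2,2]$, so $\sigma(J)$ consists of $[-2,2]$ together with a set of isolated eigenvalues that can accumulate only on $[-2,2]$; consequently $\rho(J)$ is connected. The map $z\mapsto\langle(J-zI)^{-1}e,e\rangle$ is holomorphic on $\rho(J)$, whereas $B(a,b,c;z)$ is by construction meromorphic on $\dC\setminus[-2,2]$, a domain containing $\rho(J)$; as the two functions agree on the nonempty open set $\{|z|>\|J\|\}$, the identity theorem forces $B$ to have no poles in $\rho(J)$ and \eqref{BWf} to hold throughout $\rho(J)$. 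I expect this last step to be the only genuinely delicate point: in the non-self-adjoint setting one cannot match the resolvent with the continued fraction directly on $\rho(J)$, so the equality must first be secured near infinity and then analytically continued, which is exactly where the compactness of $J-J_{0}$ enters.
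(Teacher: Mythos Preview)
Your argument is correct and follows the same overall strategy as the paper: establish \eqref{BWf} near infinity via convergence of the $J$-fraction approximants, then extend by analytic continuation, with the trace-class property read off from \eqref{trace}. The paper's proof is shorter because it outsources the two convergence statements to \cite[Theorem~26.2]{Wall48} (convergence to $B$) and \cite[Corollary~4.6(a)]{B01} (convergence to the $m$-function), whereas you re-derive the latter by hand from the Neumann series and the locality of $\langle J^{j}e,e\rangle$ for a tridiagonal matrix; this buys you a more self-contained argument at the cost of length. Your treatment of the continuation step is also more careful than the paper's: the paper simply invokes ``uniqueness of analytic functions'' without checking that $\rho(J)$ is connected, while you justify this via the compact-perturbation/essential-spectrum argument that the paper itself only deploys afterwards in Corollary~\ref{C_poles}.
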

\begin{proof}
According to \cite[Theorem 26.2]{Wall48}, the $J$-fraction \eqref{Brepr} converges to $B$ locally uniformly in some neighborhood of infinity (it also follows from the convergence of the regular $C$-fraction \eqref{CFforRofHFs} mentioned before). At the same time, due to \cite[Corollary 4.6 (a)]{B01}, the $J$-fraction converges to the $m$-function 
$\left<(J-zI)^{-1}e,e\right>_{\ell_2}$ and, hence, formula \eqref{BWf} holds true in a neighborhood of infinity, which can be extended to $\rho(J)$ by the uniqueness of analytic functions. Finally, the fact that $J-J_0$ is trace class is immediate from \eqref{trace}. 
\end{proof}

As a result, we arrive at the following statement.
\begin{corollary}\label{C_poles} Suppose $a$, $b$, and $c$ are complex numbers such that $c$ is not a nonpositive integer. Then the function $B$ is meromorphic in $\dC\setminus[-2,2]$. Moreover, a number $\lambda\in\dC\setminus[-2,2]$ is a pole of $B$ if and only if $\lambda$ is an eigenvalue of $J$.
\end{corollary}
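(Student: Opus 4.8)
The plan is to deduce everything from Theorem~\ref{ThWB} together with standard trace-class perturbation theory. Since $J-J_0$ is trace class, in particular it is compact, so Weyl's theorem on the invariance of the essential spectrum gives $\sigma_{\mathrm{ess}}(J)=\sigma_{\mathrm{ess}}(J_0)=[-2,2]$; here one uses the well-known fact that the free Jacobi matrix $J_0$ is unitarily equivalent (via the Fourier transform on $\ell_2(\dN)$ composed with the sine-basis) to multiplication by $2\cos\theta$, so $\sigma(J_0)=[-2,2]$ and it is purely essential. Consequently the spectrum of $J$ in $\dC\setminus[-2,2]$ consists only of isolated eigenvalues of finite algebraic multiplicity, and $\dC\setminus[-2,2]\subset\rho(J)$ except for this discrete set.

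Next I would localize the analysis to the open set $\Omega:=\dC\setminus[-2,2]$. On $\rho(J)\cap\Omega$ the resolvent $(J-zI)^{-1}$ is an analytic operator-valued function, and at each eigenvalue $\lambda\in\Omega$ it has a pole (of order at most the size of the corresponding Jordan block) because $\lambda$ is an isolated point of the spectrum with finite-dimensional Riesz spectral projection. Pairing with the vector $e$, formula~\eqref{BWf} shows that $B(a,b,c;z)=\langle (J-zI)^{-1}e,e\rangle_{\ell_2}$ extends from the neighbourhood of infinity (where Theorem~\ref{ThWB} gives the identity) to a function on all of $\Omega$ that is analytic off the eigenvalue set and has at worst poles at the eigenvalues. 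This proves the first assertion, that $B$ is meromorphic in $\dC\setminus[-2,2]$.

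For the equivalence ``$\lambda$ is a pole of $B$ iff $\lambda$ is an eigenvalue of $J$'', one direction is now essentially free: if $\lambda$ is not an eigenvalue then $\lambda\in\rho(J)$ (using that $\sigma(J)\cap\Omega$ is only eigenvalues), so $(J-zI)^{-1}$ is analytic at $\lambda$ and hence so is $B$, i.e.\ $\lambda$ is not a pole. The converse is the substantive point: I must show that no pole of the resolvent is ``invisible'' to the matrix element against $e$. The key structural fact is that $e=(1,0,0,\dots)^\top$ is a cyclic vector for $J$: the vectors $e, Je, J^2e,\dots$ span $\ell_2$ because $J$ is tridiagonal with nonzero off-diagonal entries $b_k\neq 0$ (for large $k$ the $b_k$ are close to $1$, and for the finitely many small indices one checks from~\eqref{abformulas} that $b_{n-1}^2=16d_{2n+1}d_{2n+2}\neq 0$, equivalently that none of the hypergeometric parameter factors vanishes in the relevant way; one may have to track the precise non-degeneracy hypothesis here). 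Cyclicity of $e$ forces the pole at $\lambda$ to survive in $\langle(J-zI)^{-1}e,e\rangle$: if the Laurent expansion of $(J-zI)^{-1}$ at $\lambda$ had leading coefficient $R_{-k}$ (a nonzero finite-rank operator commuting with $J$ on its range), then cyclicity of $e$ together with the fact that $R_{-k}$ is a polynomial in $(J-\lambda)$ acting on the range of the Riesz projection yields $\langle R_{-k}e,e\rangle\neq 0$, so $B$ genuinely has a pole of order $k$ at $\lambda$.

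The main obstacle I anticipate is precisely this last step --- ruling out hidden cancellation at a pole of the resolvent. The cleanest route is probably to invoke a result from the cited non-Hermitian Jacobi matrix literature (e.g.\ \cite{B01}) stating that the $m$-function $\langle(J-zI)^{-1}e,e\rangle$ has a pole at $\lambda$ exactly when $\lambda$ is an eigenvalue, which is exactly the statement that $e$ is a cyclic (and, dually, $*$-cyclic) vector, a property built into the tridiagonal structure with non-vanishing sub/super-diagonal; if the paper prefers a self-contained argument one instead writes the resolvent matrix element via the Weyl solution / continued-fraction tail and observes that its denominator vanishes precisely at the eigenvalues. Either way, modulo that cyclicity input, the corollary follows immediately by combining Weyl's theorem with Theorem~\ref{ThWB}.
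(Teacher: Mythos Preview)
Your approach is essentially the same as the paper's: invoke Weyl's theorem (via the trace-class, hence compact, perturbation $J-J_0$) to conclude $\sigma_{\mathrm{ess}}(J)=[-2,2]$ and thereby obtain meromorphicity of $B$ on $\dC\setminus[-2,2]$, and then rely on the non-Hermitian Jacobi matrix literature---specifically Beckermann~\cite{B01}---for the ``pole $\Leftrightarrow$ eigenvalue'' equivalence. The paper compresses this into two citations (\cite[Theorem~XIII.14]{RS4} for Weyl and \cite[Theorem~2.14]{B01} for the second part), whereas you expand the argument and correctly identify the substantive point (that the pole of the resolvent must be visible in the $(e,e)$ matrix element) before noting that the cleanest route is to cite \cite{B01}; your instinct there is exactly what the paper does.

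One caution on your attempted self-contained argument: your cyclicity claim hinges on $b_k\neq 0$ for every $k$, but the explicit formula $b_{n-1}^2=16d_{2n+1}d_{2n+2}$ can vanish when one of $a+n$, $b+n$, $c-a+n$, $c-b+n$ is zero (e.g.\ $a$ a negative integer), and the corollary's hypotheses only exclude $c$ being a nonpositive integer. In such degenerate cases the continued fraction terminates and $J$ decouples, so the cyclicity route would need a separate treatment; the paper avoids this by deferring entirely to Beckermann's framework, and you would be safest doing the same rather than pursuing the direct Riesz-projection calculation.
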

\begin{proof}
To prove this statement we use \eqref{BWf} and the spectral properties of $J$. At first, in view of \cite[Theorem XIII.14]{RS4} one deduces that the essential spectrum $\sigma_{ess}(J)$ of $J$ is the same as $\sigma_{ess}(J_0)$. Hence, $\sigma_{ess}(J)=[-2,2]$, which shows that $B$ is meromorphic in $\dC\setminus[-2,2]$. The second part of the statement is a consequence of \cite[Theorem 2.14]{B01}. 
\end{proof}

Due to the recent development of the field of complex Jacobi matrices it is also possible to get more information about the behavior of the poles of $B$ from the results obatined in \cite{EG05_1}, \cite{GK07}, and \cite{H11} (see also \cite{EG05} and \cite{GK12} for some generalizations). In particular, we have the following result.

\begin{corollary}
Let $\lambda_0$, $\lambda_1$, \dots be the poles of $B$ that lie outside of $[-2,2]$ and let each be listed as many times as its multiplicity. Then we have 
\[
\sum_{j=0}^{\infty}\dist(\lambda_j,[-2,2])\le \Vert J-J_0\Vert_1<\infty,
\]
where $\Vert\cdot\Vert_1$ is the trace class norm.
\end{corollary}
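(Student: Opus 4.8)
The plan is to deduce this from the two results already established in the section together with the eigenvalue estimate for non-self-adjoint perturbations of self-adjoint operators due to Hansmann~\cite{H11}. The skeleton is: identify the poles of $B$ outside $[-2,2]$ with the discrete spectrum of $J$, realize $J$ as a trace-class perturbation of $J_0$, and feed this into Hansmann's inequality.

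First I would record that, by Corollary~\ref{C_poles}, a point $\lambda_j\in\dC\setminus[-2,2]$ is a pole of $B$ exactly when it is an eigenvalue of $J$; since $\sigma_{ess}(J)=[-2,2]$, every such $\lambda_j$ is an isolated eigenvalue of finite algebraic multiplicity, i.e.\ it lies in the discrete spectrum of $J$. Here one must also check that the order of $\lambda_j$ as a pole of the $m$-function $\left<(J-zI)^{-1}e,e\right>_{\ell_2}$ coincides with the algebraic multiplicity of $\lambda_j$ as an eigenvalue of $J$; this is the content of \cite[Theorem 2.14]{B01}, using that $e$ is cyclic for $J$ (which holds because $J$ is tridiagonal with nonvanishing off-diagonal entries $b_k$). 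With this in hand, listing the poles $\lambda_0,\lambda_1,\dots$ with multiplicity is the same as listing the eigenvalues of $J$ in $\dC\setminus[-2,2]$ with algebraic multiplicity.

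Next I would write $J=J_0+K$ with $K:=J-J_0$. By Theorem~\ref{ThWB} (equivalently, by \eqref{trace}) the operator $K$ is trace class with $\Vert K\Vert_1<\infty$, and $\sigma(J_0)=\sigma_{ess}(J_0)=[-2,2]$. Hansmann's bound~\cite{H11} (see also \cite{EG05_1}, \cite{GK07}, and the generalizations in \cite{EG05}, \cite{GK12}) applied to the self-adjoint operator $J_0$ and the trace-class perturbation $K$ gives
\[
\sum_{\lambda\in\sigma_d(J)}\dist(\lambda,[-2,2])\le \Vert K\Vert_1,
\]
each eigenvalue counted with its algebraic multiplicity. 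Rewriting the left-hand side through the pole/eigenvalue correspondence of the previous step yields $\sum_{j}\dist(\lambda_j,[-2,2])\le\Vert J-J_0\Vert_1<\infty$, which is the claim.

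The argument is essentially a reduction, so there is no deep obstacle; the two points that require care are the multiplicity bookkeeping — matching the order of the pole of $B$ at $\lambda_j$ with the \emph{algebraic} (not merely geometric) multiplicity of $\lambda_j$ for the non-Hermitian operator $J$ — and quoting Hansmann's inequality in the correct form, namely the trace-norm case $p=1$ with constant $1$ and with the distance measured to the whole spectrum $[-2,2]$ of $J_0$. Once those are fixed, the corollary follows immediately.
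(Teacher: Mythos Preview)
Your proof is correct and follows exactly the route the paper takes: combine Theorem~\ref{ThWB}, Corollary~\ref{C_poles}, and Hansmann's inequality \cite[Theorem~2.1]{H11}. Your additional care in matching the order of the pole of $B$ with the algebraic multiplicity of the eigenvalue of $J$ (via \cite[Theorem~2.14]{B01} and the cyclicity of $e$) is a point the paper leaves implicit but which is indeed needed for the ``listed with multiplicity'' clause to make sense.
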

\begin{proof}
The statement directly follows from \cite[Theorem 2.1]{H11}, Theorem \ref{ThWB}, and Corollary \ref{C_poles}.
\end{proof}

\section{The case of the real coefficients $a$, $b$, and $c$}

Throughout this section we assume that the numbers $a$, $b$, and $c$ are real. In this case, we show that the number of non-real poles and zeroes of $B$ is finite and we give an estimate for that number. Clearly, this question is closely related to the problem of determining the number of zeroes of the Gauss hypergeometric function $F$ in $\dC\setminus[1,\infty)$ and it goes back to Klein \cite{Klein}, Hurwitz \cite{Hurwitz91}, and Van Vleck \cite{VanVleck}. Although the problem for the Gauss hypergeometric function $F$ in the case of real $a$, $b$, and $c$ was completely closed by Runckel \cite{R71}, we propose a different approach to get some information about zeroes and poles of $B$. Besides, the approach, which is based on the theory of continued fractions, the generalized Jacobi matrices, and generalized Nevanlinna functions (see \cite{D03}, \cite{DD04}, \cite{DD07}), allows us to see the general structure of $B$. 

At first, let us make the following observation. 

\begin{proposition}[cf. the second paragraph on page 341 of \cite{Wall48}]\label{StabR}
Let $a$, $b$, and $c$ be real numbers such that $c$ is not a nonpositive integer and let $a_j$ and $b_j$ be the entries of the $J$-fraction representation \eqref{Brepr} of $B(a,b,c;z)$. Then $a_j$ is a real number for 
$j=0, 1,2,\dots$ and 
\[
b_j^2>0
\]
for sufficiently large integers $j$.
\end{proposition}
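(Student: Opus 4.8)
The plan is to read off both assertions directly from the explicit formulas \eqref{cformula}, \eqref{abformulas}, and the definitions $d_j=-c_j$, $a_0=2-4d_1$, without any spectral or analytic input. First, observe that since $a$, $b$, $c$ are real and $c$ is not a nonpositive integer, every $c_j$ in \eqref{cformula} is a well-defined real number (the denominators $(c+2j)(c+2j+1)$ and $(c+2j-1)(c+2j)$ are products of reals and, for $j$ large, are certainly nonzero; for the finitely many small $j$ they are nonzero because $c$ avoids the nonpositive integers, so no $c+k$ with $k\ge 0$ can vanish — and in fact only finitely many shifts $c+k$ with $k$ possibly negative are involved, all of which one checks are harmless as they appear in \eqref{cformula} only with $k\ge 0$). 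Hence $d_j=-c_j\in\dR$ for all $j$, and then $a_0=2-4d_1$ and $a_n=2-4d_{2n+1}-4d_{2n+2}$ from \eqref{abformulas} are real for every $n$. That settles the first claim.

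For the second claim I would use the asymptotics already recorded in the paper: by the Proposition preceding Theorem \ref{ThWB} we have $b_n^2\to 1$ as $n\to\infty$. Since $1>0$ and $b_n^2$ is real (it equals $16\,d_{2n+1}d_{2n+2}$, a product of reals), convergence to $1$ forces $b_n^2>0$ for all sufficiently large $n$ — concretely, there is $N$ with $|b_n^2-1|<1/2$, hence $b_n^2>1/2>0$, for all $n\ge N$. This is the whole argument; it is essentially the remark on page 341 of \cite{Wall48} that the sign of $b_n^2$ stabilizes once the tail is close to the limit.

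Alternatively, and perhaps more in the spirit of making the statement self-contained, one can exhibit the stabilization directly from \eqref{cformula}: for large $n$,
\[
b_n^2=16\,d_{2n+1}d_{2n+2}=16\,c_{2n+1}c_{2n+2}
=16\,\frac{(a+n+1)(c-b+n+1)(b+n+1)(c-a+n+1)}{(c+2n+2)^2(c+2n+3)(c+2n+1)},
\]
and once $n$ exceeds $\max(-a,-b,-c,b-c,a-c)-1$ every one of the four numerator factors and all denominator factors are strictly positive, so $b_n^2>0$. I expect no real obstacle here; the only thing to be careful about is the bookkeeping of which finitely many factors could a priori be nonpositive, and confirming the denominators never vanish — but that is exactly guaranteed by the standing hypothesis that $c$ is not a nonpositive integer (which keeps $c+2j\pm 1$ and $c+2j$ away from $0$ for the relevant indices). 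So the main "difficulty" is purely notational, and I would present the second formula-based argument as it simultaneously reproves $b_n^2\to 1$.
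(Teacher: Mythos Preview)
Your proposal is correct and follows essentially the same route as the paper: the paper's proof simply observes that the $c_k$ from \eqref{cformula} are real, that they are negative for all large $k$ (equivalently, every linear factor is eventually positive, which is exactly your alternative argument), and then invokes \eqref{abformulas} with $d_j=-c_j$. Your primary variant, deducing $b_j^2>0$ eventually from the previously established limit $b_j^2\to 1$, is a legitimate shortcut but not a genuinely different idea, since that limit was itself obtained from \eqref{cformula} and \eqref{abformulas} in the same elementary way.
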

\begin{proof}
Note that if $a$, $b$, and $c$ are real numbers and $c$ is not a nonpositive integer, then the coefficients $c_k$ defined by \eqref{cformula} are real for $k=1,2,3,\dots$. Furthermore, one can easily see from \eqref{cformula} that $c_k$ are negative for sufficiently large $k$. Thus, the statement follows from \eqref{abformulas} and the fact that $d_j=-c_j$.
\end{proof}

Before going into details of the general real case, let us quickly consider the classical case, which occurs when each $c_k<0$. Namely, the condition
\begin{equation}\label{CCforR}
0<a<c+1,\quad 0<b+1<c+1, \quad c>0
\end{equation}
guarantees that $c_k<0$ for $k=1,2,3,\dots$ and, in this case, the continued fraction \eqref{CFforRofHFs} represents a Stieltjes function, which can be stated in the following way.

\begin{proposition} Under the condition \eqref{CCforR} we have 
\begin{equation}\label{Bnev}
B(a,b,c;z)=\int_{-2}^{2}\frac{d\mu(a,b,c;t)}{t-z},
\end{equation}
where $\mu(a,b,c;t)$ is a positive probability measure on $[-2,2]$.
\end{proposition}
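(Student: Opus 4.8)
The plan is to establish the Stieltjes-type representation \eqref{Bnev} by combining the classical convergence theory for Stieltjes continued fractions with the spectral picture from Section 2. First I would observe that under \eqref{CCforR} every coefficient $c_k$ is strictly negative, hence every $d_j=-c_j$ is strictly positive; consequently the continued fraction \eqref{SfracH} is a genuine $S$-fraction (all partial numerators positive, all partial denominators $1$ or $z$), and by the standard theory of Stieltjes fractions it converges to a Stieltjes transform. Equivalently, in the $J$-fraction form \eqref{Brepr}, the positivity of all the $d_j$ forces $a_n=2-4d_{2n+1}-4d_{2n+2}$ to be real and $b_{n-1}^2=16d_{2n+1}d_{2n+2}>0$ for \emph{all} $n\ge 1$ (not merely for large $n$, as in Proposition \ref{StabR}). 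Thus the Jacobi matrix $J$ attached to $B$ via Theorem \ref{ThWB} is a genuine real symmetric (Hermitian) tridiagonal matrix with positive off-diagonal entries, and it is bounded since $J-J_0$ is trace class by \eqref{trace}.

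Next I would invoke Theorem \ref{ThWB}: $B(a,b,c;z)=\langle (J-zI)^{-1}e,e\rangle_{\ell_2}$ on $\rho(J)$. Since $J$ is a bounded self-adjoint operator, the spectral theorem produces a scalar spectral measure $\mu$ (the spectral measure of the cyclic vector $e$) with
\[
\langle (J-zI)^{-1}e,e\rangle_{\ell_2}=\int_{\sigma(J)}\frac{d\mu(t)}{t-z},\qquad z\in\rho(J),
\]
and $\mu$ is a positive Borel measure with total mass $\langle e,e\rangle_{\ell_2}=1$, i.e.\ a probability measure. By Corollary \ref{C_poles} the essential spectrum of $J$ is $[-2,2]$; to conclude that $\mu$ is actually supported on $[-2,2]$ I would rule out eigenvalues of $J$ outside $[-2,2]$. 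Here the positivity structure helps: under \eqref{CCforR} the original $C$-fraction \eqref{CFforRofHFs} represents a Stieltjes function with spectrum in $[1,\infty)$ in the $w$-variable, and tracking the changes of variable $z\mapsto -1/z$, the even-part contraction, and $z\mapsto (z-2)/4$ used to define $B$ in \eqref{Bfunction} maps the cut $[1,\infty)$ precisely onto $[-2,2]$ (this is the routine bookkeeping I would not grind through). Hence $B$ is holomorphic off $[-2,2]$, so $\sigma(J)\subseteq[-2,2]$ and $\mu$ is supported there; identifying the endpoints and writing the integral over $[-2,2]$ as in \eqref{Bnev} finishes the argument.

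The main obstacle is the last point: confirming that there are no poles of $B$ outside $[-2,2]$, equivalently no eigenvalues of $J$ there. Two clean routes are available. One is the moment/positive-definiteness route: the Hamburger moment sequence generated by the Jacobi parameters $(a_n,b_n)$ with $b_n^2>0$ is positive definite, so a positive representing measure exists, and boundedness of $J$ pins its support inside $[-\|J\|,\|J\|]\subseteq[-2,2]$ once one checks $\|J\|\le 2$ from the explicit monotone behaviour of $a_n,b_n^2$ under \eqref{CCforR}. The other, which I would actually prefer because it keeps everything at the level of the continued fraction, is to quote the classical fact (e.g.\ \cite{Wall48}, \cite{JTh}) that a convergent $S$-fraction with positive coefficients represents a Stieltjes transform of a positive measure, and then simply rewrite that transform in the $z$-variable after the normalizing substitutions, reading off positivity of $d\mu(a,b,c;t)$ and its total mass $1$ from $B(a,b,c;z)\sim -1/z$ as $z\to\infty$ (which follows from \eqref{Brepr}). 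Either way the positivity of all $d_j$ under \eqref{CCforR} is what makes the probability-measure conclusion automatic.
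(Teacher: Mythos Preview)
Your proposal is correct, and your preferred route (b) is essentially the paper's own argument: the paper observes that with all $c_k<0$ the fraction
\[
\frac{1}{z}\left(\frac{F(a,b,c;z)}{F(a,b+1,c+1;z)}-1\right)\sim\frac{c_1}{1}\begin{array}{l}\\+\end{array}\frac{c_2z}{1}\begin{array}{l}\\+\end{array}\cdots
\]
is a genuine Stieltjes fraction, applies Wall's classical formulas (89.14) and (67.5) to obtain
\[
\frac{1}{z}\left(\frac{F(a,b,c;z)}{F(a,b+1,c+1;z)}-1\right)=\int_0^1\frac{d\theta(u)}{1-zu},
\]
and then tracks the chain of substitutions $z\mapsto -1/z$, $z\mapsto z/4$, $z\mapsto z-2$ to rewrite this as \eqref{Bnev}. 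So on that route you and the paper agree completely.

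Your spectral-theorem framing via Theorem~\ref{ThWB} is a genuinely different organization and is pleasant because it makes the probability-measure conclusion (total mass $\langle e,e\rangle_{\ell_2}=1$) automatic and connects the proposition transparently to the Jacobi-matrix picture of Section~2. The only place it does not actually shortcut the work is the support identification: to pin $\operatorname{supp}\mu\subseteq[-2,2]$ you end up needing exactly the Stieltjes-fraction/substitution bookkeeping that the paper does anyway. I would caution you against route (a): the direct bound $\|J\|\le 2$ does \emph{not} fall out of any obvious monotonicity of the $a_n,b_n^2$, and a Gershgorin-type estimate $|a_n|+b_{n-1}+b_n\le 2$ is not evident from the explicit formulas under \eqref{CCforR}. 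The clean way to see $\sigma(J)\subseteq[-2,2]$ really is to first establish holomorphy of $B$ off $[-2,2]$ via the $S$-fraction representation and then read the spectrum off from \eqref{BWf}---which is route (b), i.e.\ the paper's argument.
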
 
\begin{proof}
Notice that 
\[
\frac{1}{z}\left(\frac{F(a,b,c;z)}{F(a,b+1,c+1;z)}-1\right)\sim
\frac{c_1}{1}
\begin{array}{l} \\ +\end{array}
\frac{c_2z}{1}
\begin{array}{l} \\ +\end{array}
\begin{array}{l} \\ \cdots \end{array}.
\]
Then, applying \cite[formula (89.14)]{Wall48} and \cite[formula (67.5)]{Wall48} to the above continued fraction
yields that
\[
\frac{1}{z}\left(\frac{F(a,b,c;z)}{F(a,b+1,c+1;z)}-1\right)=\int_0^1\frac{d\theta(u)}{1-zu},
\]
which, after appropriate simplifications and substitutions, leads to the desired representation \eqref{Bnev}. 
\end{proof}
\begin{remark} The explicit formula for $\mu(a,b,c;t)$ can be extracted from the findings of \cite{Bel82}. Also, it is worth mentioning that the closed formula for the approximants to the corresponding continued fraction can be found in \cite{Wimp}.
\end{remark}

Formula \eqref{Bnev} shows that $B$ is a Nevanlinna function provided that the condition \eqref{CCforR} is satisfied. 
Furthermore, it turns out that to study properties of $B$ in the general real case it is natural to invoke the theory of generalized Nevanlinna functions, which include Nevanlinna functions as a proper subclass. At first, recall that 
generalized Nevanlinna functions were introduced by M.G. Krein and H. Langer and some information about them can be found in~\cite{KL77}. To give a precise definition of generalized Nevanlinna functions let us consider a function $\varphi$ that is meromorphic on ${\dC}\setminus{\dR}$ and that satisfies  the  symmetry  condition  $\overline{\varphi(z)}=\varphi({\overline{z}})$. Also, let $\rho(\f)$ denote the domain of holomorphicity of $\varphi$. Then, 
for a nonnegative integer $\kappa$, the
generalized Nevanlinna class  ${\bf N}_{\kappa}$
consists of functions $\varphi$ such  that  the kernel
\[
\begin{cases}
       {N}_{\f}(z,w)=
\frac{\f (z)-\overline{\f(w)}}{z-\overline{w}},& \text{$z,w\in\rho(\f)$},\\
   {N}_{\f}(z,\overline{z})=\f'(z),& \text{$z\in\rho(\f)$}\\
  \end{cases}
\]
has $\kappa$ negative squares, which means that for all choices of
$p\in{\dN}$ and $z_{1},z_{2},\dots,z_{p}\in\rho(\f)$ the Hermitian matrix
\[
\left({N}_{\f}(z_{i},z_{j})\right)_{i,j=1}^p
\] 
has at most $\kappa$ and for at least one such choice exactly $\kappa$ negative eigenvalues.
 
Clearly, ${\bf N}_{0}$ coincides with the class of Nevanlinna functions that map the upper half-plane $\dC_+$ into the upper half-plane. It is well known \cite[Chapter 3, Section 1]{A65} that a classical Nevanlinna function $\f$ admits the following integral representation
\[
\varphi(z)=\nu_1 z+\nu_2+\int_{\dR}\frac{1+tz}{t-z}d\tau(t),
\]
where $\nu_1>0$, $\nu_2$ is a real number, and $\tau$ is a non-decreasing function of bounded variation. Moreover, if $\varphi$
is holomorphic in a neighborhood of infinity and verifies the condition
\[
\varphi(z)=O\left(\frac{1}{z}\right), \quad z\to\infty
\]
then it has the following representation 
\[
\varphi(z)=\int_{\alpha}^{\beta}\frac{d\sigma(t)}{t-z},
\]
where $\sigma$ is a nonnegative measure $\sigma$ on $[\alpha,\beta]$. 

Unfortunately, the integral representation of generalized Nevanlinna functions is complicated. However, to understand the structure of generalized Nevanlinna functions one may use a factorization result from \cite{DLLS00}. Namely, if $\f\in{\bf N}_{\kappa}$ then 
there exist numbers $\alpha_1$, $\alpha_2$, \dots, $\alpha_{\kappa_1}\in\dC_+\cup\dR$ and $\beta_1$, $\beta_2$, \dots, $\beta_{\kappa_2}\in\dC_+\cup\dR$
such that
\begin{equation}\label{NkF}
\varphi(z)=\frac{\displaystyle{\prod_{j=1}^{\kappa_1}(z-\alpha_j)(z-\bar{\alpha}_j)}}{\displaystyle{\prod_{j=1}^{\kappa_2}(z-\beta_j)(z-\bar{\beta}_j)}}\widehat{\varphi}(z),
\end{equation}
where $\kappa_1$, $\kappa_2\le \kappa$ and $\widehat{\varphi}\in{\bf N}_0$ is a classical Nevanlinna function.

Finally, we are in the position to relate generalized Nevanlinna functions to our previous discussion. Before doing that, let us notice here that in view of Corollary \ref{C_poles} the function $B$ is holomorphic at a neighborhood of infinity and equals zero at infinity. Hence, we are interested in functions that are holomorphic at some neighborhood of infinity and equal zero at infinity. That is why in what follows we assume that 
\[
\varphi(z)=-\frac{s_0}{z}-\frac{s_1}{z^2}-\frac{s_2}{z^3}-\cdots, \quad |z|>R
\]
for sufficiently large number $R>0$. So, the last piece is the following particular case of the algorithm elaborated in \cite{D03} and used for developing the accompanying theory of generalized Jacobi matrices in \cite{DD04} and \cite{DD07}.  
\begin{proposition}\label{InSchur}
Let $\kappa$ be a nonnegative integer and let $\psi\in{\bf N}_{\kappa}$. Define a function $\f$ by the formula
\[
\f(z)=-\frac{\varepsilon}{z-\gamma+\varepsilon \delta^2\psi(z)},
\] 
where $\varepsilon=\pm 1$, $\gamma$ is a real number and $\delta>0$. Then we have the following:

\begin{enumerate}
\item[(i)] If $\varepsilon=1$ then $\f\in{\bf N}_{\kappa}$,
\item[(ii)] If $\varepsilon=-1$ then $\f\in{\bf N}_{\kappa+1}$.
\end{enumerate} 
\end{proposition}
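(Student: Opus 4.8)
The plan is to analyze the kernel $N_\f(z,w)$ directly and count its negative squares by relating it to the kernel $N_\psi(z,w)$ of the given generalized Nevanlinna function $\psi$. The key algebraic identity to establish first is a formula expressing $N_\f(z,w)$ in terms of $N_\psi(z,w)$. Writing $D(z)=z-\gamma+\ve\delta^2\psi(z)$ so that $\f(z)=-\ve/D(z)$, a short computation using the symmetry $\overline{\psi(\bar z)}=\psi(z)$ (hence $\overline{D(\bar w)}=\bar w-\gamma+\ve\delta^2\overline{\psi(w)}$) should give
\[
N_\f(z,w)=\frac{\f(z)-\overline{\f(w)}}{z-\bar w}=\frac{-\ve}{D(z)}\cdot\frac{\ve\delta^2 N_\psi(z,w)-1}{\,\overline{D(\bar w)}\,}\cdot(-\ve)=\frac{\ve\delta^2\, N_\psi(z,w)-1}{D(z)\,\overline{D(\bar w)}}\,\ve,
\]
or more precisely, after carefully tracking signs, an identity of the shape
\[
N_\f(z,w)=\frac{1}{D(z)}\Bigl(\ve\,\delta^2\,N_\psi(z,w)+\mu\Bigr)\frac{1}{\overline{D(\bar w)}}
\]
for an appropriate real scalar $\mu$ (which will turn out to equal $-\ve$ after simplification). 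The diagonal case $w=\bar z$ must be checked separately using $N_\f(z,\bar z)=\f'(z)$; differentiating $\f=-\ve/D$ gives $\f'=\ve D'/D^2$ and $D'=1+\ve\delta^2\psi'$, which is consistent with the off-diagonal formula in the limit.

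**Next I would** use the congruence/Schur-complement principle for Hermitian kernels: if $N_\f(z_i,z_j)=\overline{g(z_i)}\,\widetilde N(z_i,z_j)\,g(z_j)$ for a nonvanishing scalar function $g$ (here $g(z)=1/\overline{D(\bar z)}$ at the appropriate slot, using $\overline{D(\bar z)}=z-\gamma+\ve\delta^2\overline{\psi(\bar z)}=\overline{D(\bar z)}$ and the fact that $D(z)$ has no zeros in $\rho(\f)$ by construction of $\f$), then the Gram matrices $\bigl(N_\f(z_i,z_j)\bigr)$ and $\bigl(\widetilde N(z_i,z_j)\bigr)$ have the same number of negative eigenvalues, since one is obtained from the other by a congruence $X\mapsto G^* X G$ with $G=\diag(g(z_i))$ invertible. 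So the number of negative squares of $N_\f$ equals that of the kernel $\ve\delta^2 N_\psi(z,w)-\ve = \ve\bigl(\delta^2 N_\psi(z,w)-1\bigr)$ on $\rho(\f)$ (which agrees with $\rho(\psi)$ up to the finitely many zeros of $D$, and those can be handled by a continuity/density argument since $\rho(\f)$ is still dense and the number of negative squares is determined on any such dense set).

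**The core case distinction** is then the following. Since $\delta>0$, multiplying a Hermitian matrix by the positive scalar $\delta^2$ does not change its inertia, so the kernel $\delta^2 N_\psi(z,w)-1$ has the same negative-square count as $N_\psi(z,w)-\delta^{-2}$; but actually it is cleaner to keep it as $\delta^2 N_\psi-1$. When $\ve=+1$: the kernel is $\delta^2 N_\psi(z,w)-1$, and I claim adding the negative-definite rank-one-type perturbation $-1$ (the constant kernel, whose Gram matrix $(-1)_{i,j=1}^p$ is negative semidefinite of rank one) to $\delta^2 N_\psi$ changes the number of negative squares by at most one, and in fact not at all in the limit at infinity — here the precise bookkeeping comes from $\f$ vanishing at infinity and the moment/asymptotic normalization, matching the Schur-algorithm step in \cite{D03}. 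The cleanest route is to invoke the known behavior of one step of the Schur transformation: passing from $\psi$ to $\f$ via $\f(z)=-\ve/(z-\gamma+\ve\delta^2\psi(z))$ is exactly the inverse of one step of the continued-fraction (generalized Jacobi) algorithm, and the result that such a step adds $0$ to $\kappa$ when $\ve=1$ and $1$ when $\ve=-1$ is precisely \cite[][]{D03}; so the proposition is the assertion that this step is well-defined and has the stated effect, which one proves by the kernel identity above together with the sign of the rank-one correction term: for $\ve=-1$ the kernel is $-\delta^2 N_\psi(z,w)+1$, and $-N_\psi$ has $p-\kappa$ ... rather, the sign flip on the leading term forces exactly one additional negative square to appear (coming from the behavior near $z=\infty$ where $N_\f(z,z)=\f'(z)\sim \ve/z^2$, negative when $\ve=-1$).

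**The main obstacle** I expect is the careful accounting of the single negative square: showing that in case (i) no extra negative square is created and in case (ii) exactly one is, rather than "at most one" in both. The $\le$ direction in each case is the soft congruence/rank-one-perturbation argument sketched above; the matching lower bound — exhibiting a choice of points $z_1,\dots,z_p$ realizing $\kappa$ (resp.\ $\kappa+1$) negative eigenvalues — requires using the asymptotic expansion $\f(z)=-s_0/z-s_1/z^2-\cdots$ near infinity and the corresponding Hankel-type moment matrices, which is exactly where the hypothesis that $\f$ (equivalently $\psi$) is holomorphic at infinity and vanishes there enters. I would handle this by taking $z_1,\dots,z_p$ clustering near $\infty$ and comparing the leading Hankel forms of $\f$ and $\psi$, invoking the explicit recursion for the moments $s_k$ in terms of those of $\psi$ from \cite{D03}; alternatively, cite \cite{D03, DD04, DD07} directly for this moment-matrix computation, since the proposition is stated as "the following particular case of the algorithm elaborated in \cite{D03}."
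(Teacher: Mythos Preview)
The paper does not give an independent argument: its proof is a one-line reference to \cite[Theorem~3.2]{D03} and \cite[Section~2.3]{DD07}.  Your kernel-identity route is essentially what lies behind those references, so the strategy is correct.  There is, however, a sign slip in your computation that, as written, reverses the two cases.

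Redo the identity carefully.  With $D(z)=z-\gamma+\varepsilon\delta^{2}\psi(z)$ one has $\overline{\varphi(w)}=-\varepsilon/\overline{D(w)}$; note that the correct second factor is $\overline{D(w)}$, not $\overline{D(\bar w)}$ (the latter equals $D(w)$ by the symmetry of $\psi$ and would give $G\,K\,G^{T}$ instead of the Hermitian congruence $G\,K\,G^{*}$ you need).  Then
\[
\varphi(z)-\overline{\varphi(w)}
  =\varepsilon\,\frac{D(z)-\overline{D(w)}}{D(z)\,\overline{D(w)}},
\qquad
D(z)-\overline{D(w)}=(z-\bar w)\bigl(1+\varepsilon\delta^{2}N_{\psi}(z,w)\bigr),
\]
and hence
\[
N_{\varphi}(z,w)=\frac{1}{D(z)}\,\bigl(\delta^{2}N_{\psi}(z,w)+\varepsilon\bigr)\,\frac{1}{\overline{D(w)}}.
\]
So the inner kernel is $\delta^{2}N_{\psi}+\varepsilon$, not $\delta^{2}N_{\psi}-\varepsilon$.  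After the congruence by $\operatorname{diag}\bigl(1/D(z_{i})\bigr)$, the rank-one constant kernel $\varepsilon$ is \emph{positive} semidefinite when $\varepsilon=+1$ (so the number of negative squares is at most~$\kappa$) and \emph{negative} semidefinite when $\varepsilon=-1$ (so it rises by at most one).  With your sign the case analysis comes out backwards.

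On the sharp lower bound you flag as the main obstacle: your instinct to look at the asymptotics is correct, and the efficient way to package it is via the leading moment.  From the expansion of $D$ at infinity one reads off $\varphi(z)=-\varepsilon/z+O(1/z^{2})$, so $s_{0}^{\varphi}=\varepsilon$; the sign of $s_{0}$ is precisely what governs whether one Schur step preserves $\kappa$ or increases it by one, and this is the content of \cite[Theorem~3.2]{D03}.  The purely kernel-theoretic interlacing bound (rank-one perturbation changes the negative-eigenvalue count by at most one) only gives a two-sided window $\{\kappa-1,\kappa\}$ resp.\ $\{\kappa,\kappa+1\}$; closing the remaining gap genuinely requires the moment/Hankel argument or the reproducing-kernel Pontryagin space machinery of \cite{DD04,DD07}, so your final fallback to those references is appropriate and is exactly what the paper itself does.
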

\begin{proof}
The statement is a particular case of \cite[Theorem 3.2]{D03}. Alternatively, the proof of the statement can easily be extracted from the reasoning given in \cite[Section 2.3]{DD07}. 
\end{proof}

To formulate the main result of this section we need to introduce a special sequence $\varepsilon_j=\pm 1$. To do that let us recall that according to Proposition \ref{StabR} there is a nonnegative integer $N$ such that 
\[
b_{j}^2>0, \quad j=N, N+1, N+2, \dots
\]
and $b_{N-1}^2<0$ is the last negative number in the sequence $\{b_j^2\}_{j=0}^{\infty}$. Obviously, one can pick a sequence of numbers $\varepsilon_j=\pm 1$, where $j=0,1,2,\dots$, such that 
\[
\widetilde{b}_0^2=\varepsilon_0\varepsilon_1b_{0}^2>0,\quad
\widetilde{b}_1^2=\varepsilon_1\varepsilon_2b_{1}^2>0, \quad
\widetilde{b}_2^2=\varepsilon_2\varepsilon_3b_{2}^2>0, \quad \dots
\]
and $\varepsilon_N=1$, $\varepsilon_{N+1}=1$, $\varepsilon_{N+2}=1$, \dots. To be definite here, it should be stressed that we choose $\widetilde{b}_j$ to be positive for each index $j$.
\begin{theorem}
Let $a$, $b$, and $c$ be real numbers such that $c$ is not a nonpositive integer and let $\kappa$ be the number of $-1$'s in the sequence $\varepsilon_0$, $\varepsilon_1$, \dots, $\varepsilon_{N-1}$. Then $\varepsilon_0B\in{\bf N}_{\kappa}$. In other words, there exist numbers $\alpha=\alpha(a,b,c)$, $\beta=\beta(a,b,c)\in\dR$, $\alpha_1=\alpha_1(a,b,c)$, \dots, $\alpha_{\kappa_1}=\alpha_{\kappa_1}(a,b,c)\in\dC_+\cup\dR$ and $\beta_1=\beta_1(a,b,c)$, \dots, $\beta_{\kappa_1}=\beta_{\kappa_1}(a,b,c)\in\dC_+\cup\dR$
such that
\begin{equation}\label{NkFB}
B(a,b,c;z)=\varepsilon_0\frac{\displaystyle{\prod_{j=1}^{\kappa_1}(z-\alpha_j)(z-\bar{\alpha}_j)}}{\displaystyle{\prod_{j=1}^{\kappa_2}(z-\beta_j)(z-\bar{\beta}_j)}}\left(\nu_1 z+\nu_2+\int_{\alpha}^{\beta}\frac{d\mu(a,b,c;t)}{t-z}\right),
\end{equation}
where $\kappa_1$, $\kappa_2\le \kappa$, $\nu_1=\nu_1(a,b,c)>0$, $\nu_2=\nu_2(a,b,c)$ is a real number, and
$\mu(a,b,c;t)$ is a positive finite measure on $[\alpha,\beta]\supseteq [-2,2]$.
\end{theorem}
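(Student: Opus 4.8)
The plan is to peel off the continued fraction \eqref{Brepr} one level at a time and feed the pieces into Proposition \ref{InSchur}, tracking how the index $\kappa$ accumulates. Recall from \eqref{Brepr} that $B(a,b,c;z)$ is the value of the $J$-fraction with diagonal entries $a_0,a_1,\dots$ and off-diagonal squares $b_0^2,b_1^2,\dots$. Introduce the tails $B^{(k)}(z)$ of this $J$-fraction, so that $B^{(0)}=B$ and
\[
B^{(k)}(z)=\frac{-1}{z-a_k-b_k^2\,B^{(k+1)}(z)}.
\]
Comparing with the formula in Proposition \ref{InSchur} with $\gamma=a_k$, one sees that the natural substitution is $\varepsilon=\varepsilon_k\varepsilon_{k+1}$ and $\delta^2=\varepsilon_k\varepsilon_{k+1}b_k^2=\widetilde b_k^2>0$, provided we set $\psi=\varepsilon_{k+1}\varepsilon_{k+2}\cdots$ appropriately; more precisely, defining $\f_k:=\varepsilon_k\varepsilon_{k+1}B^{(k)}$ — wait, let me instead define $\f_k := \varepsilon_k B^{(k)}$ is not quite homogeneous; the clean bookkeeping is $\psi_k(z):=\varepsilon_k\varepsilon_{k+1}B^{(k+1)}(z)$ with $\varepsilon$ carried so that at each step the recursion reads $\varepsilon_k B^{(k)}(z)=-\varepsilon_k^2/(z-a_k+\varepsilon_k(\varepsilon_k b_k^2 B^{(k+1)}))$. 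I would therefore set $\f_k(z):=\varepsilon_k B^{(k)}(z)$ and check that Proposition \ref{InSchur} applies with $\varepsilon=\varepsilon_k\varepsilon_{k+1}$, $\gamma=a_k$ (real by Proposition \ref{StabR}), and $\delta^2=b_k^2$, which is legitimate once $\widetilde b_k^2=\varepsilon_k\varepsilon_{k+1}b_k^2>0$.

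First I would establish the base case: for $k\ge N$ all the $\widetilde b_k^2$ equal $b_k^2>0$ and all the relevant $\varepsilon$'s are $+1$, the diagonal entries are real, and the sequences satisfy \eqref{trace}. Hence the truncated $J$-fraction starting at level $N$ is the $m$-function of a self-adjoint (real, bounded) Jacobi matrix, so $\f_N=\varepsilon_N B^{(N)}=B^{(N)}$ is a classical Nevanlinna function, i.e. $\f_N\in{\bf N}_0$; moreover, being $O(1/z)$ at infinity with a spectrum contained in a bounded real interval $[\alpha,\beta]\supseteq[-2,2]$ (the inclusion because finitely many rank-one perturbations and the essential spectrum $[-2,2]$ of $J_0$ force the support to lie in some larger compact real interval — this follows exactly as in Corollary \ref{C_poles} via \cite[Theorem XIII.14]{RS4}), it has the integral representation $B^{(N)}(z)=\int_\alpha^\beta d\sigma(t)/(t-z)$ quoted in the excerpt.

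Next I would run the induction downward from $k=N$ to $k=0$. At step $k$ (going from $k+1$ to $k$), apply Proposition \ref{InSchur} with $\psi=\f_{k+1}\in{\bf N}_{\kappa^{(k+1)}}$, $\varepsilon=\varepsilon_k\varepsilon_{k+1}$, $\gamma=a_k$, $\delta=b_k$: the output is $\f_k=\varepsilon_k B^{(k)}\in{\bf N}_{\kappa^{(k)}}$, where $\kappa^{(k)}=\kappa^{(k+1)}$ if $\varepsilon_k\varepsilon_{k+1}=+1$ and $\kappa^{(k)}=\kappa^{(k+1)}+1$ if $\varepsilon_k\varepsilon_{k+1}=-1$. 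Since $\varepsilon_k\varepsilon_{k+1}=-1$ exactly when $\varepsilon_k$ and $\varepsilon_{k+1}$ differ, and $\varepsilon_N=\varepsilon_{N+1}=\cdots=1$, the number of sign changes in $\varepsilon_0,\varepsilon_1,\dots,\varepsilon_N$ equals the number of indices $j\in\{0,1,\dots,N-1\}$ with $\varepsilon_j=-1$ — i.e. $\kappa$ as defined in the statement. Thus after $N$ steps we reach $\f_0=\varepsilon_0 B\in{\bf N}_\kappa$. Finally, applying the factorization \eqref{NkF} to $\f_0=\varepsilon_0 B$ and then the integral representation of the classical Nevanlinna factor $\widehat\varphi\in{\bf N}_0$ — using once more that $B$, hence $\varepsilon_0 B$, is $O(1/z)$ at infinity and meromorphic off $[-2,2]$, so the poles are confined to a compact interval — yields \eqref{NkFB} with $\kappa_1,\kappa_2\le\kappa$, $\nu_1>0$, $\nu_2\in\dR$, and $\mu$ a positive finite measure on $[\alpha,\beta]\supseteq[-2,2]$. (The growth bound $\nu_1>0$ rather than $\nu_1\ge0$ comes from the standard fact that a nonconstant classical Nevanlinna function which is $O(1)$ but not $o(1/z)$ has a genuine linear term only if its measure has infinite mass; if one wants to allow $\widehat\varphi$ itself to be $O(1/z)$ one simply sets $\nu_1=0,\nu_2=0$ and absorbs everything into $\mu$, so there is no real issue, though the statement as written should perhaps read $\nu_1\ge0$.)

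The main obstacle I anticipate is purely a matter of bookkeeping rather than of substance: getting the alternating signs $\varepsilon$ to line up correctly in the recursion so that Proposition \ref{InSchur} is applicable at every level with a genuinely positive $\delta^2$, and simultaneously showing that the $\varepsilon$'s can be — and have been — chosen so that only finitely many are $-1$ and $\varepsilon_N=\varepsilon_{N+1}=\cdots=1$ (which is exactly the content of the paragraph preceding the theorem). Once the indices are set up consistently, each individual application of Proposition \ref{InSchur} is immediate, and the only other point requiring a word is that the interval $[\alpha,\beta]$ containing the support of $\mu$ is finite and contains $[-2,2]$; this is handled by the essential-spectrum argument of Corollary \ref{C_poles} together with boundedness of $J$, since the finitely many negative $b_j^2$ and the shifts $a_j$ only move finitely many eigenvalues off $[-2,2]$ but keep the whole spectrum in a bounded set, and for $k\ge N$ the relevant Jacobi matrix is self-adjoint with spectrum in that bounded set.
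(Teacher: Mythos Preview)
Your overall strategy is exactly that of the paper: show the tail $\varphi_N=B^{(N)}$ lies in ${\bf N}_0$, then apply Proposition~\ref{InSchur} step by step down to level $0$, and finish with the factorization \eqref{NkF}. However, the bookkeeping---which you rightly flag as the main obstacle---is not set up correctly, and in one place this produces a genuine error rather than a harmless slip.

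First, the tail recursion should read
\[
B^{(k)}(z)=\frac{-1}{z-a_k+b_k^2\,B^{(k+1)}(z)}
\]
(with a plus sign in front of $b_k^2$, not minus). With this correction and $\f_k:=\varepsilon_k B^{(k)}$ one gets
\[
\f_k(z)=\frac{-\varepsilon_k}{z-a_k+\varepsilon_k\,\widetilde b_k^{\,2}\,\f_{k+1}(z)},
\]
so the parameters to use in Proposition~\ref{InSchur} are $\varepsilon=\varepsilon_k$ and $\delta^2=\widetilde b_k^{\,2}>0$, \emph{not} $\varepsilon=\varepsilon_k\varepsilon_{k+1}$ and $\delta^2=b_k^2$ as you propose. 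This is precisely the parametrization the paper uses.

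Second---and this is the substantive gap---with your parameters the index would increase at each sign change of the sequence $(\varepsilon_k)$, and you then assert that the number of sign changes in $\varepsilon_0,\ldots,\varepsilon_N$ equals the number of $-1$'s among $\varepsilon_0,\ldots,\varepsilon_{N-1}$. That is false in general: for instance $(\varepsilon_0,\varepsilon_1,\varepsilon_2)=(-1,-1,+1)$ has one sign change but two $-1$'s. With the correct choice $\varepsilon=\varepsilon_k$, Proposition~\ref{InSchur} increments the index precisely when $\varepsilon_k=-1$, so after $N$ steps the total is exactly $\kappa$, with no sign-change counting required. Once you make this fix, the rest of your outline (base case via the self-adjoint tail Jacobi matrix, factorization, and the essential-spectrum argument for $[\alpha,\beta]\supseteq[-2,2]$) goes through as written.
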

\begin{proof}
At first, note that to get \eqref{NkFB} from the fact that $\varepsilon_0B\in{\bf N}_{\kappa}$ is easy. Indeed, one just needs to apply formula \eqref{NkF} and use the fact that $B$ is holomorphic at infinity. So, the essential part of the proof is to see that $\varepsilon_0B\in{\bf N}_{\kappa}$, which is done by consecutive applications of Proposition \ref{InSchur}. More precisely, let us define a function $\varphi_N$ in the following way
\[
\varphi_N(z)=\frac{-1}{z-a_N}
\begin{array}{l} \\ -\end{array}
\frac{b_N^2}{z-a_{N+1}}
\begin{array}{l} \\ -\end{array}
\frac{b_{N+1}^2}{z-a_{N+2}}
\begin{array}{l} \\ -\end{array}
\begin{array}{l} \\ \cdots \end{array},
\]
which can be rewritten as 
\[
\varphi_N(z)=\frac{-\varepsilon_N}{z-a_N}
\begin{array}{l} \\ -\end{array}
\frac{b_N^2}{z-a_{N+1}}
\begin{array}{l} \\ -\end{array}
\frac{b_{N+1}^2}{z-a_{N+2}}
\begin{array}{l} \\ -\end{array}
\begin{array}{l} \\ \cdots \end{array}
\]
since $\varepsilon_N=1$. Then, due to \cite[Theorem 66.1]{Wall48} and the fact that $b_j^2>0$ for $j=N, N+1, N+2,\, \dots$ we know that $\varphi_N$ is a Nevanlinna function, that is, $\varphi_N\in{\bf N}_0$. The next step is to define another function $\varphi_{N-1}$ via the relation
\[
\varphi_{N-1}(z)=-\frac{\varepsilon_{N-1}}{z-a_{N-1}+\varepsilon_{N-1}\widetilde{b}_{N-1}^2\varphi_N(z)}.
\]
By the construction, we have $\varepsilon_{N-1}=-1$ and, hence, Proposition \ref{InSchur} yields that $\varphi_{N-1}\in{\bf N}_1$. Repeating this procedure $N-1$ times we get that $\varphi_0=\varepsilon_0B\in{\bf N}_{\kappa}$, which completes the proof.
\end{proof}

To conclude this section and the paper, it is worth mentioning that, as is shown in \cite{DD04} and \cite{DD07}, in the real case it is natural to deal with tridiagonal matrices with real entries rather than with symmetric complex Jacobi matrices as it was done before for the most general case. Namely, in the real case one can consider the following tridiagonal matrix   
\[
H=\begin{pmatrix}
  a_o & \widetilde{b}_0&  &\\
  \varepsilon_0\varepsilon_1\widetilde{b}_0& a_1& \widetilde{b}_1&\\
     & \varepsilon_1\varepsilon_2\widetilde{b}_1&a_2&\\
     &&&\ddots&    
      \end{pmatrix},      
\]
where $\varepsilon_j=\pm 1$ and, more importantly, $\varepsilon_j=1$ for $j=N, N+1, N+2,\, \dots$. 
Consequently, the matrix $H$, which is not symmetric in general, is a finite rank perturbation of a real symmetric Jacobi matrix. This type of tridiagonal matrices is a very particular case of the generalized Jacobi matrices introduced and studied in \cite{DD04} and \cite{DD07}. In the entire generality, the generalized Jacobi matrices play the same role for generalized Nevanlinna functions as Jacobi matrices do for Nevanlinna functions. Also, the bounded generalized Jacobi matrices, which is the case for us, lead to self-adjoint and bounded operators in Pontryagin spaces. To quickly demonstrate it, let us define the diagonal matrix $G=\diag(\varepsilon_0,\varepsilon_1,\varepsilon_2, \dots)$. Then,
if we consider the bilinear form on $\ell_2$
\[
(x,y)_G=(Gx,y)_{\ell_2}, \quad x,y\in \ell_2,
\]  
we see that $(Hx,y)_G=(x,Hy)_G$. Next, following \cite{DD04} and \cite{DD07} we can also introduce the $m$-function of $H$ via the formula
\[
m(z)=((H-z)^{-1}e,e)_G, \quad e=(1,0,0,\dots)^{\top},
\]
which is proved to be a generalized Nevanlinna function. Remarkably, some nonclassical orthogonal polynomials on the unit disk were introduced in \cite{DS17} and the Szeg\H{o} mapping applied to those polynomials leads to tridiagonal matrices that have the same structure as $H$ does \cite{DS17sum}.

\end{document}